\newtheorem{theorem}{Theorem}[section]
\newtheorem{lemma}{Lemma}[section]
\newtheorem{remark}{Remark}[section]
\newcommand{\R}{\mathbb{R}}
\newcommand{\be}{\begin{eqnarray}}
\newcommand{\ee}{\end{eqnarray}}
\newtheorem{prop}{Proposition}
\begin{document}
\selectlanguage{english}
\thispagestyle{empty}
\numberwithin{equation}{section}
\pagestyle{myheadings}
\markboth{\small H. Freist\"uhler}{\small 
Phase Boundaries in Relativistic Korteweg Fluids}
\title{On Phase Boundaries in Relativistic Korteweg Fluids} 
\author{Heinrich Freist\"uhler} 
\date{August 21,2023}
\selectlanguage{english}
\maketitle

\begin{abstract}
This is the first of several planned papers that study the existence and local-in-time persistence  of 
phase fronts in the Lorentz invariant Euler equations for gases of van der Waals type, aiming at 
transferring earlier results of Slemrod
and of Benzoni-Gavage and collaborators to the context of the theory of relativity. While the later 
papers will examine more general admissibility criteria, this one uses and extends the author's Lorentz 
invariant formulation of Korteweg's theory of capillarity, establishing a family of phase fronts 
that have a regular heteroclinic profile with respect to the associated Euler-Korteweg equations. 
\end{abstract}

\section{Introduction}
This article studies aspects of the dynamics of compressible fluids that can assume two 
different phases. While there are situations in which it is more appropriate to view such fluids 
as mixtures of two different materials and use (e.g., a concentration as) an additional phase field 
variable, we 
here follow the classical description going back to van der Waals \cite{vdW}, 
in which it is the fluid's density 
itself that plays the r\^ole of the order parameter. For perfect fluids of van der Waals type, phase boundaries 
are 
fronts of discontinuity that separate regions supporting one of
the phases from regions filled with the other; such ideal phase boundaries may be studied from the 
point of view of the theory of weak solutions to nonlinear hyperbolic systems of conservation laws.
This thinking began with 
papers \cite{Sl83,Sl84} by Slemrod, who pointed out that a proper treatment requires 
a further relation complementing the classical Rankine-Hugoniot conditions. Slemrod also noticed that, 
differently from the case of shock waves, restricting attention for higher-order effects to viscosity
alone cannot describe the assumed internal structure 
of phase boundaries, but the inclusion 
of capillary forces can; he obtained his additonal jump condition using Korteweg's classical theory of 
capillarity \cite{Ko,TN}. 
Later, Benzoni-Gavage showed that using Slemrod's admissibility criterion 
a Lopatinski condition holds, namely neutrally if one refers to capillarity only \cite{B1} 
and uniformly if one refers to capillarity \emph{and} viscosity \cite{B2}; this finding makes
phase fronts amenable the local-in-time stability theory developed by Majda \cite{Ma} 
and M\'etivier \cite{Me}.
If, again neglecting viscosity, one actually introduces capillarity into the partial differential equations
themselves, the dynamics is governed by the Euler-Korteweg system that has been derived by Dunn and 
Serrin \cite{DS}. This system and the long-time stability of diffuse
interphase interfaces in it have been studied thoroughly by Benzoni-Gavage and collaborators \cite{B3,B4}.
\par\medskip
All these phenomena should have counterparts in the Lorentz invariant relativistic setting, and the 
purpose of the present paper is to take one step in this direction. A relativistic version of the 
Euler-Korteweg equations has been formulated in \cite{F18} both for isentropic and for general 
non-barotropic fluids. A first contribution of the present paper is here the consistent treatment of 
general barotropic fluids that covers in particular the relativistic thermobarotropic case, a class of 
fluids that specifically arise in the relativistic context where the conservation of energy-momentum 
can make sense for fluids without even the presence of a particle number density; the distinction, 
notably at the mathematical level, of this class from the (somewhat artificial) one of isentropic fluids 
was studied in paper \cite{F19} whose findings we extend here to the Euler-Korteweg setting. 
We show that both the Euler and the Euler-Korteweg equations can conveniently be expressed  in terms
of the fluid's Lichnerowicz index $f$ \cite{Li}, or its \emph{extended} version $\bar f$, respectively, 
and an associated conjugate index $\nu$. This $(\bar f,\nu)$ formulation of the Euler-Korteweg equation
enables a clean treatment of the \emph{capillary-profile problem}, i.e., the task of finding traveling
waves that are heteroclinic to boundary states in the two different phases. We solve this problem both    
for barotropic and non-barotropic fluids, thus establishing a relativistic counterpart of the picture 
Slemrod created for non-relativistic fluids. 
We expect that like in the non-relativistic setting, also \emph{visco-capillary} profiles exist --
we think of using notably the covariant viscosity developed by Temple and the author (cf.\ \cite{FT23, 
FT17} and references therein) -- and the resulting \emph{Slemrod admissibility criterion} will 
lead to validity of the uniform Lopatinski condition for the corresponding sharp interfaces. A study 
to that end is on the way \cite{FRS}. Finally, a similar transfer from the non-relativistic to the 
relativistic framework should hold for phase-field models \cite{F14, FK17, FK19}. 

Our transfer from the classical Galilei invariant setting to the Lorentz invariant context
seems of a principal interest, already as physically speaking the latter is more fundamental, so that 
any severe obstacles for such transfer would pose questions also on the non-relativistic 
models. But we also imagine concrete applications of the considerations in this paper and notably 
those in \cite{FRS}, possibly to phase boundaries in gaseous stars. 
In that regard, papers by Christodoulou  have been a major source of motivation for our attempting this 
transfer. In \cite{C95,C96} he considered a star that consists of \emph{dust} and a (relativistically) 
\emph{incompressible} phase: the pressure $\hat p(\rho)$ vanishes for energy density $\rho$ below a 
certain threshold $\rho_*$ and equals $\rho-\rho_*$ for $\rho$ above $\rho_*$, which corresponds 
to a degenerate limit of a van der Waals gas. While that limit seems particularly appropriate for 
capturing behavior in the late period of a collapsing star, considering an actual, nondegenerate, 
van der Waals type equation of state might enable the modelling of more general situations. 
A different interesting aspect of Christodoulou's work is that he studied the problem 
in the setting of \emph{general} relativity.  While the present paper remains completely
within the framework of \emph{special} relativity, an extension of the transfer begun here to the context of general
relativity seems very feasible; regarding the treatment of shock waves, we refer to \cite{FR} for 
such an extension.    

In Section 2, we set the stage by discussing basics and the $(f,\nu)$ formulation of \emph{perfect} 
barotropic fluids and its two interpretations for thermobarotropic and isentropic materials.      
Section 3 develops the Euler-Korteweg equations for barotropic fluids and the associated
$(\bar f,\nu)$ formulation. In Sections 4 and 5, we show the existence of heteroclinic solutions 
to the Euler-Korteweg equation, for general barotropic and general non-barotropic fluids, 
respectively. We end this introduction with a remark. Not surprisingly from a physics point of view, 
the general picture obtained in this paper displays an overall similarity with the one papers 
\cite{Sl83,Sl84,B1} have established in the non-relativistic setting. The interested reader will 
however notice that the qualitative difference between relativistic barotropic fluids and what are often 
called barotropic fluids in the non-relativistic context leads to a fine exchange of r\^oles between
the \emph{kinetic rule} and part of the Rankine-Hugoniot relations.  
\goodbreak
\section{Barotropic perfect fluids}
We begin with a consideration on perfect fluids in a special case which does \emph{not} admit 
phase boundaries.  
A  perfect relativistic fluid is barotropic and everywhere stable 
if there is a one-to-one relation between energy and pressure,
\be\label{barotropic}
p=\hat p(\rho),\quad \rho=\hat\rho(p) 
\quad \text{with $\hat p',\hat\rho'>0$ and }
\hat p\circ\hat\rho
=\hat\rho\circ\hat p=\text{id}_{(0,\infty)}.
\ee
Besides by $\rho$ or $p$, the local state of the fluid can be expressed via either its Lichnerowicz index 
$f$ \cite{Li} or its conjugate index $\nu$, quantities that are defined as
\be\label{fnu} 
f=\hat f(p)=\exp\int_1\frac{dp}{\hat\rho(p)+p}
\quad\text{and}\quad 
\nu=\hat\nu(\rho)=\exp\int_1\frac{d\rho}{\rho+\hat p(\rho)}.
\ee 
The functions $\pi=\hat f^{-1}$ and $r=\hat\nu^{-1}$ 
with which 
\be\label{prho}
p=\pi(f),
\quad
\rho=r(\nu)
\ee
are strictly convex, i.e., $\pi'',r''>0$, and satisfy
\be\label{Leg}
r(\nu)+\pi(f)=\nu f,
\ee
i.e., energy and pressure are Legendre conjugate, 
with 
\be\label{fnuLeg}
f=r'(\nu),
\quad
\nu=\pi'(f)
\ee
as dual variables; cf.\ \cite{F19}. 
With $U^\alpha$ the 4-velocity and $\Pi^{\alpha\beta}=g^{\alpha\beta}+U^\alpha U^\beta$ 
the projection orthogonal to $U^\alpha$, the dynamics of such fluid is described by the barotropic 
Euler equations  
\be\label{Eulerbaro}
T^{\alpha\beta}_{,\beta}
=0
\ee
where in the stress-energy tensor
\be\label{idealstress}
T^{\alpha\beta}=\rho U^{\alpha}U^{\beta}+p\Pi^{\alpha\beta},
\ee
energy and pressure may be interpreted as
\be\label{poff}
p=\pi(f),\quad \rho=f \pi'(f)-\pi(f)
\ee
or as 
\be\label{rhoofnu}
\rho=r(\nu),\quad p=\nu r'(\nu)-r(\nu).
\ee
\begin{prop}
For smooth solutions, system \eqref{Eulerbaro}, \eqref{idealstress} is equivalent to the pair consisting 
of the companion law
\be\label{nuconservation}
(\nu U^\beta)_{,\beta}=0
\ee 
and the equations of motion 
\be\label{eomf}
U^\beta\big(fU^\alpha\big)_{,\beta}+f^{,\alpha}=0.
\ee 
\end{prop}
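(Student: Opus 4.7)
The plan is to project the Euler equation $T^{\alpha\beta}{}_{,\beta}=0$ onto its component parallel to the 4-velocity and the one orthogonal to it, and to recognize the two resulting scalar and vector equations as \eqref{nuconservation} and \eqref{eomf}, respectively. First I would rewrite the stress-energy tensor, using $\Pi^{\alpha\beta}=g^{\alpha\beta}+U^\alpha U^\beta$, in the form $T^{\alpha\beta}=(\rho+p)U^\alpha U^\beta+pg^{\alpha\beta}$ and compute
\[
T^{\alpha\beta}{}_{,\beta}=\bigl((\rho+p)U^\beta\bigr)_{,\beta}\,U^\alpha
+(\rho+p)\,U^\beta U^\alpha{}_{,\beta}+p^{,\alpha}.
\]
Using $U_\alpha U^\alpha=-1$, which gives $U_\alpha U^\alpha{}_{,\beta}=0$, the two projections are straightforward to isolate.

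Next I would contract with $U_\alpha$ to obtain the longitudinal part
\[
U_\alpha T^{\alpha\beta}{}_{,\beta}=-U^\beta\rho_{,\beta}-(\rho+p)\,U^\beta{}_{,\beta}.
\]
From the definition \eqref{fnu} of $\nu$ one gets $\hat\nu'(\rho)=\nu/(\rho+p)$, so
\[
(\nu U^\beta)_{,\beta}=\frac{\nu}{\rho+p}\bigl(U^\beta\rho_{,\beta}+(\rho+p)U^\beta{}_{,\beta}\bigr)
=-\frac{\nu}{\rho+p}\,U_\alpha T^{\alpha\beta}{}_{,\beta}.
\]
Since $\nu,\rho+p>0$, the longitudinal component of the Euler equation is equivalent to the companion law \eqref{nuconservation}.

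Then I would apply the transverse projector $\Pi^\gamma{}_\alpha=\delta^\gamma{}_\alpha+U^\gamma U_\alpha$ to obtain
\[
\Pi^\gamma{}_\alpha T^{\alpha\beta}{}_{,\beta}
=(\rho+p)\,U^\beta U^\gamma{}_{,\beta}+p^{,\gamma}+U^\gamma U^\beta p_{,\beta},
\]
and observe that a contraction of \eqref{eomf} with $U_\alpha$ vanishes identically (again by $U_\alpha U^\alpha{}_{,\beta}=0$), so \eqref{eomf} is already transverse and equivalent to its image under $\Pi^\gamma{}_\alpha$. Expanding \eqref{eomf} gives
\[
U^\beta f_{,\beta}\,U^\alpha+f\,U^\beta U^\alpha{}_{,\beta}+f^{,\alpha}=0,
\]
and from \eqref{fnu} one has $\hat f'(p)=f/(\rho+p)$, hence $f_{,\alpha}=\frac{f}{\rho+p}\,p_{,\alpha}$. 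Multiplying \eqref{eomf} by $(\rho+p)/f$ then reproduces exactly $\Pi^\gamma{}_\alpha T^{\alpha\beta}{}_{,\beta}=0$.

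Combining the two projections, and using that $\{U_\alpha,\Pi^\gamma{}_\alpha\}$ together determine a covector on the tangent space, I conclude that \eqref{Eulerbaro}--\eqref{idealstress} is equivalent to the system \eqref{nuconservation}--\eqref{eomf} on smooth solutions. The only nontrivial bookkeeping is the correct use of the logarithmic derivatives built into the definitions of $f$ and $\nu$; everything else is a clean decomposition that relies on $U^\alpha$ being timelike unit and on the equivalence of information carried by the parallel and transverse parts of a 4-vector equation.
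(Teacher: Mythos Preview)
Your proof is correct and is exactly the standard parallel/transverse decomposition that the paper has in mind when it says the result ``follows from straightforward calculations'' (deferred there to the Korteweg case with $\kappa=0$). The only detail worth flagging is that your use of $\hat f'(p)=f/(\rho+p)$ relies on the stable barotropic setting of \eqref{barotropic}, which is precisely the hypothesis under which the Proposition is stated; nothing is missing.
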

This follows from straightforward calculations (which are contained in the proof of Theorem 2.1 below
as the special case $\kappa=0$).
\goodbreak

The quantities $\nu$ and $f$ can be interpreted in various ways. The most natural interpretation 
of $\nu$ is that of entropy $S$, in which case $f$ is the temperature $\theta$. In that case, 
in which one might call the fluid \emph{thermobarotropic}, relations \eqref{poff} and \eqref{rhoofnu}
become 
\be\label{poftheta}
p=\pi(\theta),\quad \rho=\theta \pi'(\theta)-\pi(\theta),
\ee
\be\label{rhoofS}
\rho=r(S) ,\quad p=S r'(S)-r(S),
\ee
while the PDEs \eqref{nuconservation}, \eqref{eomf}  read 
\be\label{Sconservation}
(S U^\beta)_{,\beta}=0
\ee 
\be\label{eomtheta}
U^\beta\big(\theta U^\alpha\big)_{,\beta}+\theta^{,\alpha}=0.
\ee 

In a different interpretation, $\nu$ is considered as a particle number density $n$ and
$f$ becomes the enthalpy $h=(\rho+p)/n$. In that picture, of an \emph{isentropic} massive fluid, 
relations \eqref{poff} and \eqref{rhoofnu}
read 
\be\label{pofh}
p=\pi(h),\quad \rho=h \pi'(h)-\pi(h),
\ee
\be\label{rhoofn}
\rho=r(n) ,\quad p=n r'(n)-r(n),
\ee
while the PDEs \eqref{nuconservation}, \eqref{eomf} become 
\be\label{nconservation}
(n U^\beta)_{,\beta}=0
\ee 
\be\label{eomh}
U^\beta\big(h U^\alpha\big)_{,\beta}+h^{,\alpha}=0.
\ee 
This interpretation is of limited applicability, notably in view of the fact that  
the overall description \eqref{nuconservation}, \eqref{eomf} is good only for regular 
solutions. In particular the companion law \eqref{nuconservation} 
(cf.\ \cite{Da})
does not generally hold for weak solutions; in the presence of shock waves, it must be replaced 
 by 
\be\label{nuproduction}
(\nu U^\beta)_{,\beta}>0,
\ee 
an inequality that makes sense in the case $\nu=S$, namely as the second law of thermodynamics, 
but not, physically, for $\nu=n$ (cf.\ \cite{F19}).  
\par\bigskip
We now switch from the above situation to a more general one,
so as to allow for fluids which are \emph{not} everywhere stable: 
We keep assuming that the pressure is a function of the energy, but not necessarily vice versa, 
i.e., of the two functions in \eqref{barotropic}, $\hat p$ may be non-monotone and $\hat\rho$ 
non-existent. In this situation we can 
still use ``half'' of the above description, defining $\nu$ as in \eqref{fnu}$_2$, with 
\be\label{nonconvex}
r>0\text{ and $\nu r'(\nu)>r(\nu)$, \quad but not necessarily $r''>0$ everywhere};
\ee
the range $\{\nu:r''(\nu)<0\}$ corresponds to a \emph{spinodal} region for the Euler equations
\eqref{Eulerbaro}, \eqref{rhoofnu}, where the derivative
$$
\hat p'(\rho)=\frac{\nu r''(\nu)}{r'(\nu)}, 
$$
which otherwise is the squared sound speed $c_s^2$, assumes negative values.
While formulas \eqref{fnu}$_1$, \eqref{prho}$_1$, \eqref{Leg}, \eqref{fnuLeg}$_2$, \eqref{poff} cannot generally be used now, the Lichnerowicz index 
$$
f=f(\nu)
$$
can still be defined using \eqref{fnuLeg}$_1$. 
\newpage
\noindent
This more general situation covers in particular fluids of \emph{van der Waals} type, for which 
sgn$(\hat p'(\rho))=\text{sgn}(r''(\nu))$ switches first from positive to negative and then back 
to negative, as $\rho$ (like $\nu$) varies from $0$ to $\infty$.    
\par\bigskip
\section{Barotropic Korteweg fluids}
The considerations of this section are independent  of whether the fluid we study is of van der Waals type
or not. Their purpose is to formulate, for general barotropic fluids,
the relativistic counterpart of the equations given by Korteweg \cite{Ko} for fluids with capillarity.
\par\bigskip
To do so, we modify \eqref{rhoofnu} and work with the \emph{extended energy}  
\be
\bar\rho(\nu,\partial\nu)=r(\nu)+\frac12\kappa(\nu)\nu_{,\gamma}\nu^{,\gamma}
\ee
and the \emph{extended pressure} 
\be
\bar p(\nu,\partial\nu,\partial^2\nu)
=\nu\frac{\delta\bar\rho(\nu,\partial\nu)}{\delta \nu}-\bar\rho(\nu,\partial\nu),
\ee
where
\be
\frac{\delta\bar\rho(\nu,\partial\nu)}{\delta \nu}
=
\frac{\partial\bar\rho(\nu,\partial\nu)}{\partial\nu}
-\left(\frac{\partial\bar\rho(\nu,\partial\nu)}{\partial\nu_{,\gamma}}\right)_{,\gamma},
\ee
and define -- generalizing \cite{F18} -- the associated Euler-Korteweg equations by 
\be\label{EKbaro}
\bar T^{\alpha\beta}_{,\beta}
=0
\ee
where
\be\label{fullT}
\bar T^{\alpha\beta}=\bar\rho U^{\alpha}U^{\beta}+\bar p\Pi^{\alpha\beta}+\check K^{\alpha\beta}
\ee
with
\be\label{defcheckK}
\check K^{\alpha\beta}
=
\kappa \nu^{,\alpha}\nu^{,\beta}
\ee
the non-scalar part of the (negative) \emph{relativistic Korteweg tensor} 
\be\label{defK}
K^{\alpha\beta}
=
-\nu(\kappa \nu^{,\gamma})_{,\gamma}\Pi^{\alpha\beta}
+\kappa \nu^{,\alpha}\nu^{,\beta}.
\ee
\begin{theorem}
For smooth solutions, system \eqref{EKbaro}--\eqref{defcheckK} is equivalent to the pair consisting 
of the companion law
\be\label{nuconservationagain}
(\nu U^\beta)_{,\beta}=0
\ee 
and the equations of motion 
\be\label{eombarf}
U^\beta\big(\bar fU^\alpha\big)_{,\beta}+\bar f^{,\alpha}=0,
\ee 
with the \emph{extended Lichnerowicz index} 
\be\label{barf}
\bar f(\nu,\partial\nu,\partial^2\nu)=\frac{\delta\bar\rho(\nu,\partial\nu)}{\delta \nu}.
\ee
\end{theorem}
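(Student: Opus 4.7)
The plan is to mimic the classical split of the divergence of a perfect-fluid stress-energy tensor into components along and orthogonal to the 4-velocity, using the projector $\Pi^\alpha{}_\gamma=\delta^\alpha_\gamma+U^\alpha U_\gamma$. Concretely, I aim at establishing the pointwise decomposition
\begin{equation*}
\bar T^{\alpha\beta}{}_{,\beta}
=
U^\alpha\,\bar f\,(\nu U^\beta)_{,\beta} \;+\; \nu\bigl[U^\beta(\bar f U^\alpha)_{,\beta}+\bar f^{,\alpha}\bigr],
\end{equation*}
which, using $\nu,\bar f>0$ and the fact that the bracket is automatically $U$-orthogonal (since $U^\alpha U_\alpha=-1$ forces $U_\alpha[U^\beta(\bar f U^\alpha)_{,\beta}+\bar f^{,\alpha}]=0$ for any scalar $\bar f$), yields both directions of the equivalence.

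The computation hinges on two algebraic identities. The first, $\nu\bar f=\bar\rho+\bar p$, is immediate from the definitions of $\bar p$ and $\bar f$. The second is the capillary counterpart of the identity $\rho^{,\alpha}=f\nu^{,\alpha}$ used in the $\kappa=0$ case and reads
\begin{equation*}
\bar\rho^{,\alpha} \;=\; \bar f\,\nu^{,\alpha} \;+\; \check K^{\alpha\gamma}_{,\gamma}.
\end{equation*}
To prove it I would differentiate $\bar\rho=r(\nu)+\tfrac12\kappa(\nu)\nu_{,\gamma}\nu^{,\gamma}$, substitute the explicit form $\bar f=r'(\nu)+\tfrac12\kappa'(\nu)\nu_{,\gamma}\nu^{,\gamma}-(\kappa\nu^{,\gamma})_{,\gamma}$ produced by the functional derivative in \eqref{barf}, and use the product rule $(\kappa\nu^{,\alpha}\nu^{,\gamma})_{,\gamma}=(\kappa\nu^{,\gamma})_{,\gamma}\nu^{,\alpha}+\kappa\nu^{,\gamma}\nu^{,\alpha}{}_{,\gamma}$. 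Combining the two identities gives the key relation $\bar p^{,\alpha}+\check K^{\alpha\gamma}_{,\gamma}=\nu\bar f^{,\alpha}$, by which the divergence of the non-scalar Korteweg contribution is absorbed into an effective pressure gradient.

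With these identities in hand, expanding $\bar T^{\alpha\beta}{}_{,\beta}$, combining the $\bar\rho$- and $\bar p$-terms via $\bar\rho+\bar p=\nu\bar f$, and rewriting $\bar p^{,\alpha}+\check K^{\alpha\gamma}_{,\gamma}$ as $\nu\bar f^{,\alpha}$, one finds that contraction with $U_\alpha$ collapses the expression to $-\bar f(\nu U^\beta)_{,\beta}$ (using $U^\alpha U_\alpha=-1$ and $U^\alpha_{,\beta}U_\alpha=0$), while its projection by $\Pi^\alpha{}_\gamma$ collapses to $\nu[U^\beta(\bar f U^\alpha)_{,\beta}+\bar f^{,\alpha}]$, producing the displayed decomposition. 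The main obstacle I anticipate is the clean derivation of the capillary identity $\bar\rho^{,\alpha}=\bar f\nu^{,\alpha}+\check K^{\alpha\gamma}_{,\gamma}$: this is exactly the point where the variational structure of $\bar f$ (through the functional derivative) must match the divergence of the non-scalar part of the Korteweg tensor. Once it is in place the rest is bookkeeping, and the scheme specialises to the proposition by setting $\kappa\equiv 0$, whence $\check K\equiv 0$, $\bar f=f$, $\bar\rho=\rho$, $\bar p=p$.
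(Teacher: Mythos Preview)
Your proposal is correct. The paper does not actually write out a proof of this theorem---it only remarks (after Proposition~1) that the $\kappa=0$ case is a specialisation of it---and the decomposition you derive via the identities $\nu\bar f=\bar\rho+\bar p$ and $\bar p^{,\alpha}+\check K^{\alpha\gamma}{}_{,\gamma}=\nu\bar f^{,\alpha}$ is precisely the ``straightforward calculation'' the paper alludes to.
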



\newpage
\section{Phase boundaries in barotropic Korteweg fluids}
We now look for traveling waves, i.e., smooth solutions $(\nu,U^0,U^1,U^2,U^3)$ to system 
\eqref{EKbaro}--\eqref{defcheckK} which are stationary in some Lorentz frame. Working in such frame
we assume w.\ l.\ o.\ g.\ that solutions have $U^2,U^3=0$ and depend only on $x^1$. 
As we are interested
in phase boundaries, we will require the waves to be heteroclinic, i.e., they converge for 
$x^1\to\pm\infty$ to end states which are different from each other.
The purpose of this section is to show the following.
\begin{theorem}\label{statptsandhetorbs}
For a barotropic van der Waals type pressure law $\hat p$, there is a non-trivial one-parameter family 
\be\label{endstates}
\left((\underline\nu,\underline U^0,\underline U^1)(m),
(\overline\nu,\overline U^0,\overline U^1)(m)\right), \quad
0\le m<\bar m,
\ee
of pairs of non-identical states such that for every value of the parameter $m$, system 
\eqref{EKbaro}--\eqref{defcheckK} has two heteroclinic solutions 
$$
\begin{aligned}
&x^1\mapsto (\overrightarrow \nu_m(x^1),\overrightarrow U^0_m(x^1),\overrightarrow U^1_m(x^1),0,0)),\\
&x^1\mapsto (\overleftarrow  \nu_m(x^1),\overleftarrow  U^0_m(x^1),\overleftarrow  U^1_m(x^1),0,0)),
\end{aligned}
$$
with
\be
\begin{aligned}
(\overrightarrow \nu_m,\overrightarrow U^0_m,\overrightarrow U^1_m)(-\infty)&=(\underline \nu,\underline U^0,\underline U^1)(m)\\
(\overrightarrow \nu_m,\overrightarrow U^0_m,\overrightarrow U^1_m)(+\infty)&=(\overline  \nu,\overline U^0,\overline U^1)(m)
\end{aligned}
\ee
and
\be
\begin{aligned}
(\overleftarrow \nu_m,\overleftarrow U^0_m,\overleftarrow U^1_m)(-\infty)&=(\overline \nu,\overline  U^0,\overline  U^1)(m)\\
(\overleftarrow \nu_m,\overleftarrow U^0_m,\overleftarrow U^1_m)(+\infty)&=(\underline\nu,\underline U^0,\underline U^1)(m).
\end{aligned}
\ee
\end{theorem}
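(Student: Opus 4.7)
My plan is to exploit the $(\bar f,\nu)$ formulation provided by Theorem 3.1 in order to reduce the traveling-wave PDE system to a planar Hamiltonian phase portrait, in the spirit of Slemrod's classical construction. In the stationary Lorentz frame with $U^2=U^3=0$ and all fields depending only on $x^1$, the companion law \eqref{nuconservationagain} integrates once to $\nu U^1=-m$, where $m\ge 0$ is a constant of integration that will play the role of the parameter in \eqref{endstates}. The normalization $U^\alpha U_\alpha=-1$ then gives $U^0=\sqrt{1+m^2/\nu^2}$, and the $\alpha=0$ component of \eqref{eombarf} integrates to $\bar f\,U^0=e$ for a second constant $e$. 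A short algebraic check, exploiting $U^1(U^1)'=U^0(U^0)'$, shows that the $\alpha=1$ component of \eqref{eombarf} is automatically satisfied once the other three relations hold, so the profile is entirely encoded by them.

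Evaluating \eqref{barf} for a profile depending only on $x^1$ yields $\bar f=f(\nu)-\tfrac12\kappa'(\nu)(\nu')^2-\kappa(\nu)\nu''$, so the remaining content of $\bar f\,U^0=e$ is the conservative second-order ODE
\[
\kappa(\nu)\,\nu''+\tfrac12\kappa'(\nu)(\nu')^2 \;=\; f(\nu)-\frac{e}{\sqrt{1+m^2/\nu^2}}.
\]
Multiplying by $\nu'$ and integrating once gives the Hamiltonian first integral
\[
\tfrac12\kappa(\nu)(\nu')^2=\Phi_{e,m}(\nu)-C,\qquad \Phi_{e,m}'(\nu)=f(\nu)-\frac{e}{\sqrt{1+m^2/\nu^2}}.
\]
A heteroclinic connection between two end states $\underline\nu<\overline\nu$ then exists exactly when both are saddle equilibria of the associated planar system and lie on a common level set of $\Phi_{e,m}$; explicitly,
\[
f(\underline\nu)=\frac{e}{\sqrt{1+m^2/\underline\nu^{\,2}}},\qquad f(\overline\nu)=\frac{e}{\sqrt{1+m^2/\overline\nu^{\,2}}},\qquad \Phi_{e,m}(\underline\nu)=\Phi_{e,m}(\overline\nu).
\]
The first two equations are the Rankine--Hugoniot-type jumps induced by the conserved quantities $\nu U^1$ and $\bar f U^0$; the third is the relativistic analogue of the Maxwell equal-area rule and plays the role of Slemrod's kinetic condition.

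For $m=0$ these three scalar conditions collapse to the classical Maxwell construction for the single function $f$. Under the van der Waals hypothesis, $r''$ and therefore $f'$ changes sign twice on $(0,\infty)$, so there is a unique triple $(\underline\nu_{0},\overline\nu_{0},e_{0})$ solving them, with both endpoints in the stable regime where $f'>0$. This positivity makes the linearization of the planar ODE at both endpoints a saddle, so the level set $\{\Phi_{e_0,0}=\Phi_{e_0,0}(\underline\nu_0)\}$ carries the two opposite heteroclinic orbits and $m=0$ of the asserted family is produced. Treating the three scalar conditions as $F(\underline\nu,\overline\nu,e;m)=0$ in $\R^{3}$, a direct computation shows that $\partial F/\partial(\underline\nu,\overline\nu,e)$ is nonsingular at $m=0$, with decisive entries $f'(\underline\nu_0)$, $f'(\overline\nu_0)$ and the gap $\overline\nu_0-\underline\nu_0>0$. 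The implicit function theorem therefore yields a smooth branch $m\mapsto(\underline\nu(m),\overline\nu(m),e(m))$ defined on a maximal open interval $[0,\bar m)$, along which both endpoints remain non-spinodal saddles by continuity and hence each inherit a pair of heteroclinic orbits; the associated $(\underline U^0,\underline U^1),(\overline U^0,\overline U^1)$ are read off from $U^1=-m/\nu$ and $U^0=\sqrt{1+m^2/\nu^{\,2}}$.

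The main obstacle is the qualitative control of this deformation away from $m=0$: one has to track the critical points of $\Phi_{e,m}$ as $m$ increases, and verify that the two relevant roots of $f(\nu)=e/\sqrt{1+m^2/\nu^{\,2}}$ remain separated, stay outside the spinodal interval, and respect the causality bound $|U^1|<U^0$. The number $\bar m$ in the statement is precisely the first value at which one of these nondegeneracy conditions fails. Showing that $\bar m>0$, which is what makes the family \eqref{endstates} nontrivial, amounts to a careful perturbation analysis of how the multiplicative factor $1/\sqrt{1+m^2/\nu^{\,2}}$ tilts the Maxwell chord of $f$; this is the step where most of the work will lie, and the bifurcation at $m=\bar m$ can be expected to correspond either to the merging of the two endpoints inside the spinodal region or to the loss of the equal-area property.
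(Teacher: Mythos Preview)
Your reduction is correct and matches the paper's: integrating the companion law to $\nu U^1=m$, integrating the $\alpha=0$ component of \eqref{eombarf} to $\bar f U^0=c$, and recognizing the resulting second-order ODE as Hamiltonian with potential $j_m(\nu)=r(\nu)-c\sqrt{m^2+\nu^2}$ is exactly what the paper does in Lemmas 4.3--4.5; your three scalar conditions are equivalent to the paper's self-intersection condition $X_m(\nu^-)=X_m(\nu^+)$ for the curve $X_m(\nu)=\big((r'(\nu))^2(1+m^2/\nu^2),\,\check p(\nu)+r'(\nu)m^2/\nu\big)$, and your Jacobian nondegeneracy is the transversality of that self-intersection (Lemma 4.2).

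Your final paragraph, however, misidentifies the difficulty. Once the implicit function theorem fires at $m=0$, you already have $\bar m>0$ and a smooth branch on $[0,\bar m)$; by continuity the endpoints stay in the open regions $\{r''>0\}$ for small $m$, so the saddle structure persists automatically. No further ``careful perturbation analysis'' is needed, and the theorem does not ask you to characterize or compute $\bar m$. (Your causality worry is also empty: $U^0=\sqrt{1+(U^1)^2}>|U^1|$ holds identically.) Drop that paragraph and the proof is complete.
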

A pressure law $$\check p(\nu)=\hat p(r(\nu))=\nu r'(\nu)-r(\nu)$$ 
is of van der Waals type if 
there exist two values $\nu_B>\nu_A>0$ such that 
\be\label{vdW}
\begin{aligned}
r''(\nu)>0\quad&\text{for}\quad 0<\nu<\nu_A\text{ or }\nu>\nu_B\\
r''(\nu)<0\quad&\text{for}\qquad\nu_A<\nu<\nu_B.
\end{aligned}
\ee
We seek for the pairs \eqref{endstates} as solving both the Rankine-Hugoniot conditions
\be\label{RHem}
\begin{aligned}
(\rho+p)U^0U^1&=q^0\\
(\rho+p)U_1^2+p&=q^1 
\end{aligned}
\ee
associated with the Euler equations \eqref{Eulerbaro}  
and the analogous relation
\be\label{RHnu}
\nu U^1=m
\ee
associated with the additional conservation law \eqref{nuconservation}, 
for appropriate values of the parameters $q^0,q^1,m$ with $q^1>0$ and, as amounts to restricting 
attention to $U^1\ge 0$, with $q^0,m\ge 0$. Using \eqref{RHnu}, we rewrite \eqref{RHem} as
\be\label{RHred}
\begin{aligned}
(r'(\nu))^2\left(1+\frac{m^2}{\nu^2}\right)&=c^2\\
\check p(\nu)+\frac{r'(\nu)}\nu m^2&=q^1 
\end{aligned}
\ee
with 
$$
\text{$c=q^0/m$ if $m,q^0>0$, and 
free otherwise.}
$$ 
\begin{lemma}[Maxwell states]\label{Maxwell} For $m=0$ there exists exactly one value $(c,q^1)\in(0,\infty)^2$ 
for which \eqref{RHred} has a (unique) solution $\nu_0^-\in(0,\nu_A)$ and a (unique) solution 
$\nu_0^+\in(\nu_B,\infty)$.
 \end{lemma}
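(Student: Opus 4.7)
The plan is to reduce to the classical Maxwell/Gibbs two-phase equilibrium conditions and then solve them by a monotonicity-plus-intermediate-value argument. Setting $m=0$ in \eqref{RHred} leaves
\[
r'(\nu)^2=c^2,\qquad \check p(\nu)=q^1,
\]
so, choosing the positive root (which is forced anyway by $r'>0$, itself a consequence of $\nu r'(\nu)>r(\nu)>0$ in \eqref{nonconvex}), the sought states $\nu_0^-$ and $\nu_0^+$ must satisfy
\[
r'(\nu_0^-)=r'(\nu_0^+)=c,\qquad \check p(\nu_0^-)=\check p(\nu_0^+)=q^1.
\]
Using $\check p(\nu)=\nu r'(\nu)-r(\nu)$, the second pair of equalities is equivalent to $r(\nu_0^+)-r(\nu_0^-)=c(\nu_0^+-\nu_0^-)$, so the problem is precisely that the graph of $r$ admit a common tangent line of slope $c$ at two points $\nu_0^-\in(0,\nu_A)$ and $\nu_0^+\in(\nu_B,\infty)$. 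This is Maxwell's classical double-tangent construction expressed in the energy-density coordinates.

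Under the van der Waals hypothesis \eqref{vdW}, $r'$ is strictly increasing on $(0,\nu_A)$, strictly decreasing on $(\nu_A,\nu_B)$, and strictly increasing on $(\nu_B,\infty)$; in particular $r'(\nu_B)<r'(\nu_A)$, so the interval $I:=(r'(\nu_B),r'(\nu_A))$ is non-empty. For $f\in I$ the implicit function theorem yields unique smooth branches $\nu^-(f)\in(0,\nu_A)$ and $\nu^+(f)\in(\nu_B,\infty)$ with $r'(\nu^\pm(f))=f$. Set
\[
\Phi(f):=\check p(\nu^-(f))-\check p(\nu^+(f)).
\]
Differentiating $\check p(\nu^\pm(f))=\nu^\pm(f)\,f-r(\nu^\pm(f))$ and using $r'(\nu^\pm(f))=f$ gives the envelope identity $\tfrac{d}{df}\check p(\nu^\pm(f))=\nu^\pm(f)$, hence
\[
\Phi'(f)=\nu^-(f)-\nu^+(f)<0,
\]
which already yields uniqueness of any $c\in I$ with $\Phi(c)=0$, and so uniqueness of the corresponding pair $(c,q^1)=(c,\check p(\nu^\pm(c)))$.

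For existence I examine the boundary signs of $\Phi$ via the auxiliary function $g_f(\nu):=r(\nu)-f\nu$, which coincides with $-\check p$ at each critical point of $g_f$. As $f\uparrow r'(\nu_A)$, $\nu^-(f)\to\nu_A$ and $\nu^+(f)\to\nu_\star^+$, the unique preimage of $r'(\nu_A)$ in $(\nu_B,\infty)$; since $r'<r'(\nu_A)$ throughout $(\nu_A,\nu_\star^+)$, one has $g_{r'(\nu_A)}'<0$ on this interval, hence $g_{r'(\nu_A)}(\nu_\star^+)<g_{r'(\nu_A)}(\nu_A)$, i.e.\ $\check p(\nu_\star^+)>\check p(\nu_A)$, so $\Phi(f)\to\check p(\nu_A)-\check p(\nu_\star^+)<0$. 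The symmetric argument as $f\downarrow r'(\nu_B)$ produces $\Phi(f)\to\check p(\nu_\star^-)-\check p(\nu_B)>0$. Continuity and the strict monotonicity of $\Phi$ then deliver a unique $c\in I$ with $\Phi(c)=0$; setting $q^1:=\check p(\nu^\pm(c))$ produces the required pair, and $q^1>0$ is immediate from $\nu r'(\nu)>r(\nu)$ in \eqref{nonconvex}. The one delicate point—and the only step that I expect requires care—is the existence of the preimage $\nu_\star^+\in(\nu_B,\infty)$ of $r'(\nu_A)$ on the high-density convex branch; for a genuine van der Waals law this follows from the unbounded growth of $r'$ for large $\nu$ (the hallmark of the incompressible-phase asymptotics), but it could alternatively be imposed as an explicit mild hypothesis on the equation of state to keep the statement self-contained.
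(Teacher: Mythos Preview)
Your argument is correct and is the Legendre dual of the paper's. The paper parametrizes by the common \emph{pressure} $\pi=q^1$: it takes the two branches $\nu_\pm(\pi)$ of $\check p(\nu)=\pi$ on $(0,\nu_A)$ and $(\nu_B,\infty)$, forms the slope difference $I(\pi)=r'(\nu_+(\pi))-r'(\nu_-(\pi))$, computes $I'(\pi)=1/\nu_+(\pi)-1/\nu_-(\pi)<0$, and checks the boundary signs $I(\check p(\nu_B))>0>I(\check p(\nu_A))$. You instead parametrize by the common \emph{slope} $f=c$ and track the pressure difference $\Phi(f)$, obtaining $\Phi'(f)=\nu^-(f)-\nu^+(f)<0$ via the envelope identity. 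Both are the same Maxwell double-tangent construction read off along conjugate coordinates; the derivative computations are equally clean one-liners, and neither route buys anything the other does not. The point you flag---that $r'$ actually attains the value $r'(\nu_A)$ again on $(\nu_B,\infty)$---has its exact counterpart in the paper's version (that $\check p$ on $(\nu_B,\infty)$ reaches $\check p(\nu_A)$, and $\check p$ on $(0,\nu_A)$ reaches $\check p(\nu_B)$), which the paper takes for granted; so your caveat is a point of honesty rather than a gap relative to the paper.
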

\begin{proof}
For $m=0$ and momentarily writing $\pi$ for $q^1$, \eqref{RHred} reads 
\be\label{RHnu0}
\begin{aligned}
r'(\nu)&=c\\
\check p(\nu)&=\pi. 
\end{aligned}
\ee
Equation \eqref{RHnu0}$_2$ has unique solutions 
$$
\nu_-(\pi)\in (0,\nu_A),\quad \nu_+(\pi)\in(\nu_B,\infty)
$$
iff $\check p(\nu_B)<\pi<\check p(\nu_A)$.
As the quantity 
$$
I(\pi)=r'(\nu_+(\pi))-r'(\nu_-(\pi))=\int_{\nu_-(\pi)}^{\nu_+(\pi)}\frac{\check p(\nu)}\nu d\nu
$$
satisfies 
$$
I'(\pi)=\frac1{\nu_+(\pi)}-\frac1{\nu_-(\pi)}<0
\quad\text{and}\quad
I(\check p(\nu_B))>0>I(\check p(\nu_A)),
$$
there exists a unique $\pi_*$ with $I(\pi_*)=0$. This shows the assertion with $q^1=\pi_*, 
c=r'(\nu_0^-)=r'(\nu_0^+)$ and $\nu_0^-=\nu_-(\pi_*),\nu_0^+=\nu_+(\pi_*)$.
\end{proof}
Consider now, in principle for any real $m$, the curve $X_m:(0,\infty)\to\R^2$,
$$
X_m(\nu)=\begin{pmatrix}
         (r'(\nu))^2\left(1+\displaystyle{\frac{m^2}{\nu^2}}\right)\\ 
         \check p(\nu)+\displaystyle{\frac{r'(\nu)}\nu m^2}
         \end{pmatrix}
$$
associated with \eqref{RHred}.
\begin{lemma}\label{nuofm}
For $m\ge 0$ sufficiently small, there exist unique continuous functions $m\mapsto \nu^\pm(m)$ such that
(i) $X_m$ intersects itself transversely at $X_m(\nu^-(m))=X_m(\nu^+(m))$, and
(ii) $\nu^-(0)=\nu_0^-$, $\nu^+(0)=\nu_0^+$.
\end{lemma}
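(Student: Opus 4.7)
The natural move is to apply the implicit function theorem to the map
$$F:(\nu_1,\nu_2,m)\longmapsto X_m(\nu_1)-X_m(\nu_2)\in\R^2,$$
viewed near the point $(\nu_0^-,\nu_0^+,0)$, at which $F$ vanishes by Lemma \ref{Maxwell}. Invertibility of the $2\times 2$ Jacobian $D_{(\nu_1,\nu_2)}F$ at this point will deliver two things at once: a unique continuous (in fact $C^1$) branch $m\mapsto(\nu^-(m),\nu^+(m))$ of solutions to $F=0$ emanating from $(\nu_0^-,\nu_0^+)$, and, since the columns of that Jacobian are precisely $X_m'(\nu^-(m))$ and $-X_m'(\nu^+(m))$, the transverse self-intersection of $X_m$ required in~(i).

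So the core of the argument is to verify non-degeneracy of the Jacobian at $m=0$. A direct computation, using $\check p'(\nu)=\nu r''(\nu)$, gives
$$X_0'(\nu)=r''(\nu)\begin{pmatrix}2r'(\nu)\\ \nu\end{pmatrix}.$$
Since $\nu_0^-\in(0,\nu_A)$ and $\nu_0^+\in(\nu_B,\infty)$ lie in the two stable phases of the van der Waals law \eqref{vdW}, one has $r''(\nu_0^\pm)>0$. Combined with the Maxwell identity $r'(\nu_0^-)=r'(\nu_0^+)=c>0$ supplied by Lemma \ref{Maxwell}, the determinant reduces to
$$\det\bigl(\,X_0'(\nu_0^-)\ \big|\ -X_0'(\nu_0^+)\,\bigr)=2\,c\,r''(\nu_0^-)\,r''(\nu_0^+)\,(\nu_0^--\nu_0^+),$$
which is strictly negative because $\nu_0^-<\nu_A<\nu_B<\nu_0^+$. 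The IFT therefore yields $\nu^\pm$ on some interval $[0,\bar m)$, and uniqueness of the branch through $(\nu_0^-,\nu_0^+)$ is automatic.

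The only real obstacle is the non-degeneracy check itself, and what I would want to highlight is the structural reason it succeeds: the two columns of $DX_0$ collapse along a common direction $(2r',\nu)$, and the Maxwell condition $r'(\nu_0^-)=r'(\nu_0^+)$ reduces the determinant to the difference $\nu_0^--\nu_0^+$, which is nonzero precisely because the two Maxwell states are separated by the spinodal interval. In a subsequent step (not part of the lemma as stated) one would read off from $X_m(\nu^-(m))$ the corresponding values of $c$ and $q^1$ as $C^1$ functions of $m$, and thus the full pairs of endstates \eqref{endstates}.
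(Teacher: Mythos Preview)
Your proof is correct and takes essentially the same approach as the paper: compute $X_0'(\nu)=r''(\nu)\bigl(\begin{smallmatrix}2r'(\nu)\\ \nu\end{smallmatrix}\bigr)$, use $r'(\nu_0^-)=r'(\nu_0^+)$ together with $r''(\nu_0^\pm)\neq 0$ and $\nu_0^-\neq\nu_0^+$ to conclude that the two tangent vectors are linearly independent, and then perturb in $m$. The paper phrases the perturbation step tersely as ``by continuity'' rather than invoking the implicit function theorem, but your IFT formulation is the same argument made explicit and has the bonus of cleanly delivering the uniqueness claim.
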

\begin{proof}
By continuity, it suffices to show that $X_0$ intersects itself transversely at 
\be\label{X00}
X(\nu_0^-)=X_0(\nu_0^+).
\ee
However, \eqref{X00} holds indeed as these two points are both equal to the value $(c^2,q^1)$ found in 
Lemma \ref{Maxwell}. And since
$$
X_0'(\nu)=\begin{pmatrix}
            2r'(\nu)r''(\nu)\\ \nu r''(\nu) 
            \end{pmatrix}
=r''(\nu)\begin{pmatrix}
         2r'(\nu)\\ \nu 
         \end{pmatrix}
$$
as well as
$$
r'(\nu_0^-)=r'(\nu_0^+)\quad\text{and}\quad r''(\nu_0^\pm)\neq 0,
$$
the tangent directions $X_0'(\nu_0^-)$ and $X_0'(\nu_0^+)$ are not parallel.
\end{proof}
\noindent
For $0\le m\le \bar m$, we define $c(m),q^1(m)>0$ by $((c(m))^2,q^1(m))=X_m(\nu^\pm(m))$.
\par\medskip
Turning to finding the traveling waves, we write 
$\dot{}$ for $\partial/\partial x^1$ and briefly 
$$
\bar\rho(\nu,\dot\nu)=r(\nu)+\frac12\kappa(\nu)\dot\nu^2,
\quad 
\bar f(\nu,\dot\nu,\ddot\nu)
=
r'(\nu)-\frac{\kappa'(\nu) }2 \dot\nu^2
-\kappa(\nu)\ddot\nu.
$$
\begin{lemma}
(i) A function $x\mapsto(\nu(x^1),U^0(x^1),U^1(x^1),0,0)$ solves \eqref{EKbaro}--\eqref{defcheckK} iff
there exist $m,c\in\R$ such that $\nu$ satisfies
\be\label{profilebarf}
\bar f(\nu,\dot\nu,\ddot\nu)\left(1+\left(\frac m\nu\right)^2\right)^{1/2}=c.
\ee
with the same $m$ and some $c\in\R$, while
$U^1(x^1)=m/\nu(x^1)$ (and $U^0(x^1)=(1+(U^1(x^1))^2)^{1/2}$).\\
(ii) With 
\be
a_{mc}(\nu)=c\left(m^2+\nu^2\right)^{1/2}  
\ee
and written as a first-order system, \eqref{profilebarf} reads 
\be\label{fods}
\begin{aligned}
\dot\nu&=\omega\\
\kappa(\nu)\dot\omega&=r'(\nu)-\frac{\kappa'(\nu)}2\omega^2-a_{mc}'(\nu). 
\end{aligned}
\ee
and has the first integral
\be\label{J}
J_{mc}(\nu,\omega)=r(\nu)-\frac12\kappa(\nu)\omega^2-a_{mc}(\nu).
\ee
\end{lemma}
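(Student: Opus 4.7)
The plan is to derive part (i) by inserting the traveling-wave ansatz into the $(\bar f,\nu)$ formulation \eqref{nuconservationagain}--\eqref{eombarf} provided by the preceding theorem, and then to recast the scalar profile equation \eqref{profilebarf} algebraically for part (ii).

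For the forward direction of (i), I first specialize the companion law to $U^\alpha=(U^0(x^1),U^1(x^1),0,0)$, which reduces \eqref{nuconservationagain} to $(\nu U^1)'=0$ and yields an integration constant $m$ with $\nu U^1=m$. The normalization $U^\alpha U_\alpha=-1$ then forces $U^1=m/\nu$ and $U^0=(1+(m/\nu)^2)^{1/2}$, so the 4-velocity is completely determined by $\nu$. Feeding this ansatz into \eqref{eombarf}, the $\alpha=2,3$ components are trivially satisfied, while a short contraction of \eqref{eombarf} with $U_\alpha$ shows the system is tangent to $U^\perp$, so the $\alpha=0,1$ components carry only one scalar equation. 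Taking $\alpha=0$ and exploiting $\bar f^{,0}=0$ gives $U^1(\bar f U^0)'=0$; when $m\neq 0$ this integrates to $\bar f U^0=c$ for some $c\in\R$, which after substituting $U^0$ is exactly \eqref{profilebarf}. When $m=0$ the $\alpha=0$ component is vacuous and I instead read off $\alpha=1$, which reduces to $\bar f'=0$ and hence $\bar f=c$, the $m=0$ specialization of \eqref{profilebarf}. The converse is immediate by reversing these steps: the companion law holds by construction, the $\alpha=0,2,3$ components of \eqref{eombarf} reduce to $0=0$ for any $m$, and $\alpha=1$ follows from the orthogonality identity when $m\neq 0$ and by direct computation with $\bar f$ constant when $m=0$.

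For part (ii), the identity $(1+(m/\nu)^2)^{1/2}=(m^2+\nu^2)^{1/2}/\nu$ rewrites \eqref{profilebarf} as $\bar f(\nu,\dot\nu,\ddot\nu)=c\nu/(m^2+\nu^2)^{1/2}=a_{mc}'(\nu)$. Inserting the explicit form of $\bar f$ displayed just before the lemma and setting $\omega=\dot\nu$ produces the planar system \eqref{fods}. The first integral $J_{mc}$ is then checked by a one-line differentiation along the flow: the chain rule contributes $r'(\nu)\omega-\tfrac12\kappa'(\nu)\omega^3-a_{mc}'(\nu)\omega-\kappa(\nu)\omega\dot\omega$, and substituting $\kappa(\nu)\dot\omega$ from the second line of \eqref{fods} cancels the four terms in pairs.

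No single step is deep, but the argument has to be threaded carefully at $m=0$, where $U^1\equiv 0$ prevents integration of the $\alpha=0$ component and the profile equation has to be extracted from $\alpha=1$ instead. Apart from this bookkeeping, the proof is purely algebraic once the $(\bar f,\nu)$ formulation is in hand.
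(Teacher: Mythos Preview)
Your proof is correct and follows exactly the approach the paper indicates: reduce the companion law \eqref{nuconservationagain} to $\nu U^1=m$, then read off \eqref{profilebarf} from the equations of motion \eqref{eombarf}, with part (ii) being routine algebra. The paper's own proof is a two-line sketch (``readily reduces'' and ``follows trivially''), so your version simply fills in the details, including the careful handling of the $m=0$ case that the paper leaves implicit.
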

\begin{proof}
(i) In this context, the PDE \eqref{nuconservationagain} is equivalent to the algebraic relation 
\eqref{RHnu}, and then \eqref{eombarf} readily reduces to \eqref{profilebarf}. (ii) follows trivially.
\end{proof}
\noindent
We define $J_m$ as $J_{mc}$  and $j_m$ as $r-a_{mc}$, both with $c=c(m)$ from above, so that 
\be
J_m(\nu,\omega)=j_m(\nu)-\kappa(\nu)\omega^2.
\ee
\begin{lemma}
There are $\tilde m>0$ and an open interval $N\subset(0,\infty)$ such that for every $m\in[0,\tilde m]$, 
the potential $J_m$  
has exactly three stationary points, $(\nu^-(m),0),(\nu^0(m),0),(\nu^+(m),0)$ in $N\times\R$, with 
$\nu^-(m),\nu^+(m)$ from Lemma \ref{nuofm} and an appropriate $\nu^0(m)$ with 
\be\label{nununu}
\nu^-(m)<\nu^0(m)<\nu^+(m).
\ee
Of these, $(\nu^-(m),0)$ and $(\nu^+(m),0)$ 
are nondegenerate saddles, and $(\nu^0(m),0)$ is a nondegenerate maximum, such that 
$$
J(\nu^-(m),0)=J(\nu^+(m),0)<J(\nu^0(m),0).
$$
\end{lemma}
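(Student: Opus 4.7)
The plan is to reduce everything to the scalar potential $j_m(\nu)=r(\nu)-c(m)(m^2+\nu^2)^{1/2}$. Since $\nabla J_m(\nu,\omega)=(j_m'(\nu)-\tfrac12\kappa'(\nu)\omega^2,\,-2\kappa(\nu)\omega)$ and $\kappa>0$, the stationary points of $J_m$ in $N\times\R$ are exactly the pairs $(\nu^*,0)$ with $j_m'(\nu^*)=0$. Moreover, at any such point the Hessian of $J_m$ is diagonal with entries $j_m''(\nu^*)$ and $-2\kappa(\nu^*)<0$, so $(\nu^*,0)$ is a nondegenerate saddle when $j_m''(\nu^*)>0$ and a nondegenerate local maximum when $j_m''(\nu^*)<0$. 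Thus the entire lemma reduces to a one-variable statement about zeros of $j_m'$ and the sign of $j_m''$ at each zero.

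I would first handle $m=0$. There $j_0'(\nu)=r'(\nu)-c(0)$, and the van der Waals hypothesis \eqref{vdW} makes $r'$ strictly increasing on $(0,\nu_A)$, strictly decreasing on $(\nu_A,\nu_B)$, and strictly increasing on $(\nu_B,\infty)$. Lemma \ref{Maxwell} supplies two roots of $r'-c(0)$, namely $\nu_0^-\in(0,\nu_A)$ and $\nu_0^+\in(\nu_B,\infty)$; a third root $\nu_0^0\in(\nu_A,\nu_B)$ is forced by the intermediate value theorem applied on $(\nu_A,\nu_B)$, where $r'$ is strictly monotone and takes a value strictly larger than $c(0)$ at $\nu_A$ and strictly smaller at $\nu_B$. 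Strict monotonicity on each of the three intervals rules out any further root. Nondegeneracy is automatic: $j_0''(\nu_0^\pm)=r''(\nu_0^\pm)>0$ and $j_0''(\nu_0^0)=r''(\nu_0^0)<0$.

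Next I would extend to $m>0$ by perturbation. Applying the implicit function theorem to $j_m'(\nu)=0$ at each of the three simple roots at $m=0$ yields continuous continuations $\nu^-(m),\,\nu^0(m),\,\nu^+(m)$. Rewriting \eqref{RHred}$_1$ as $r'(\nu)=c(m)\nu/(m^2+\nu^2)^{1/2}$, one sees that the branches $\nu^\pm(m)$ from Lemma \ref{nuofm} are precisely the IFT-continuations of $\nu_0^\pm$, so the notation is consistent. Choosing $N$ as a small open neighbourhood of $[\nu_0^-,\nu_0^+]$ on which $\min |j_0''|$ is bounded away from $0$ at the three roots and $j_0'$ is otherwise bounded away from $0$, uniform convergence $j_m'\to j_0'$ on $\overline N$ gives some $\tilde m>0$ such that for $m\in[0,\tilde m]$ the function $j_m'$ has exactly three zeros in $N$, namely $\nu^\pm(m)$ and $\nu^0(m)$, the ordering \eqref{nununu} persists by continuity, and the signs of $j_m''$ at the three zeros match those at $m=0$.

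Finally I would read off the $J$-values. Eliminating $r'(\nu^\pm(m))$ between \eqref{RHred}$_1$ and \eqref{RHred}$_2$ produces the clean identity
\begin{equation*}
c(m)\bigl(m^2+(\nu^\pm(m))^2\bigr)^{1/2}-r(\nu^\pm(m))=q^1(m),
\end{equation*}
that is, $j_m(\nu^-(m))=j_m(\nu^+(m))=-q^1(m)$, hence $J_m(\nu^-(m),0)=J_m(\nu^+(m),0)$. The strict inequality $J_m(\nu^0(m),0)>J_m(\nu^\pm(m),0)$ holds at $m=0$ because $j_0$ has two local minima of equal value at $\nu_0^\pm$ separated by a local maximum at $\nu_0^0$, and it extends to $m\in[0,\tilde m]$ by continuity (shrinking $\tilde m$ if necessary). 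The step I expect to require the most care is the uniform count in the previous paragraph: one must make sure $N$ is chosen so that no further zero of $j_m'$ can enter $N$ from its boundary or from a fold of the graph of $j_m'$ as $m$ varies, but this is a standard transverse-perturbation argument once the nondegeneracy at $m=0$ is in hand.
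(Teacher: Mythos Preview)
Your proof is correct and follows essentially the same approach as the paper: reduce the stationary points of $J_m$ to zeros of $j_m'$ via the diagonal Hessian, recognize from \eqref{RHred}$_1$ that $\nu^\pm(m)$ are precisely such zeros with $j_m''(\nu^\pm(m))>0$, use \eqref{RHred}$_2$ to get $j_m(\nu^\pm(m))=-q^1(m)$, and invoke the van der Waals shape of $r$ for the middle critical point. The only difference is organizational---the paper argues directly for each small $m$ (so the equality of values and the strict inequality at $\nu^0(m)$ follow immediately from the min--max--min structure of $j_m$), whereas you settle $m=0$ first and then perturb---but the ingredients are identical.
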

\begin{proof}
In view of the definition of $\nu^\pm(m)$, \eqref{RHred}$_2$ implies
$$
j_m(\nu^-(m))=j_m(\nu^+(m))=-q^1(m). 
$$
while \eqref{RHred}$_1$ entails 
$$
j_m'(\nu^-(m))=j_m'(\nu^+(m))
$$
As $\nu^-(m)<\nu_A$ and $\nu^+(m)>\nu_B$, we also have 
$$
j_m''(\nu^-(m)),j_m''(\nu^+(m))>0.
$$
The van der Waals property  \eqref{vdW} tells us that at least for sufficiently small $m$,
$j_m$ must have exactly one stationary point between its two strict minima at $\nu^\pm(m)$, 
namely a strict maximum; i.e., there is a unique $\nu^0(m)$ with \eqref{nununu} and
$$
j_m''(\nu^0(m))<0\quad\text{and}\quad j_m(\nu^0(m))>j_m(\nu^\pm(m)).
$$
The assertion follows as 
$$
DJ_m(\nu,\omega)=(0,0)\Leftrightarrow j_m'(\nu)=\omega=0
\quad\text{and } D^2J_m(\nu,\omega)=\begin{pmatrix}
                                    j_m''(\nu)&0\\ 0&-\kappa(\nu) 
                                    \end{pmatrix}.
$$ 
\end{proof}
The following obvious corollary readily implies the statement of Theorem \ref{statptsandhetorbs}.
\begin{lemma}
The level set $\{(\nu,\omega):J_m(\nu,\omega)=-q^1(m)\}$ contains two regular curves, symmetric to each 
other, connecting $(\nu^-(m),0)$ and $(\nu^+(m),0)$ in $\{\omega>0\}$ and in $\{\omega>0\}$, 
respectively.
\end{lemma}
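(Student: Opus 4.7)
The plan is to solve the level-set equation directly. Since $J_m(\nu,\omega)=j_m(\nu)-\kappa(\nu)\omega^2$, the condition $J_m(\nu,\omega)=-q^1(m)$ becomes
\[
\kappa(\nu)\,\omega^2 \;=\; \phi_m(\nu)\;:=\;j_m(\nu)+q^1(m).
\]
The preceding lemma supplies everything needed about $\phi_m$: the identities $\phi_m(\nu^\pm(m))=0$ and $\phi_m'(\nu^\pm(m))=j_m'(\nu^\pm(m))=0$, together with $\phi_m''(\nu^\pm(m))=j_m''(\nu^\pm(m))>0$; moreover $\phi_m>0$ on the open interval $(\nu^-(m),\nu^+(m))$, since the unique interior critical point $\nu^0(m)$ is a strict maximum of $j_m$ with $j_m(\nu^0(m))>j_m(\nu^\pm(m))$. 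Thus $\phi_m$ has simple double zeros at the endpoints and is strictly positive in between.

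From this, the interior portion of the level set would follow at once. On the open interval $(\nu^-(m),\nu^+(m))$ the function $\sqrt{\phi_m/\kappa}$ is smooth and positive (assuming $\kappa>0$ throughout), so the level set there splits cleanly into the two smooth graphs $\gamma^{\pm}:\nu\mapsto(\nu,\pm\sqrt{\phi_m(\nu)/\kappa(\nu)})$, lying in $\{\omega>0\}$ and $\{\omega<0\}$ respectively and interchanged by the manifest symmetry $\omega\mapsto-\omega$ of $J_m$.

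The essential step would be to secure regularity at the endpoints, where $\sqrt{\phi_m}$ has only a corner when viewed as a function of $\nu$. For this I would switch to the flow of \eqref{fods}. The Hessian of $J_m$ at each saddle, namely $\mathrm{diag}(j_m''(\nu^\pm(m)),\,-2\kappa(\nu^\pm(m)))$, is indefinite, and the linearisation of \eqref{fods} has real eigenvalues $\pm\sqrt{j_m''(\nu^\pm(m))/\kappa(\nu^\pm(m))}$ with eigenvectors transverse to the $\nu$-axis. The stable/unstable manifold theorem then produces smooth one-dimensional invariant curves through each saddle, and the local zero set of $J_m+q^1(m)$ is precisely their transverse union. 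Select at $(\nu^-(m),0)$ the unstable branch entering $\{\omega>0\}$: by the first integral it remains on the level set, so off the saddle it coincides with $\gamma^+$; and by monotonicity of $\nu$ along the orbit (forced by $\dot\nu=\omega>0$, since $\gamma^+$ meets the $\nu$-axis only at the endpoints) it must approach $\nu=\nu^+(m)$, where it smoothly joins the stable branch of $(\nu^+(m),0)$ entering from $\{\omega>0\}$. Concatenation delivers a single regular curve in the closure of $\{\omega>0\}$ connecting the two saddles, and the reflection $\omega\mapsto-\omega$ provides the required companion in $\{\omega<0\}$.

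The only real obstacle is this endpoint stitching: the explicit graph description fails at the saddles, so $C^1$-regularity there must come from the hyperbolicity of the linearisation rather than from the first integral alone. Once the first-integral reformulation is in hand and the stable manifold theorem has been invoked at both endpoints, the rest is routine.
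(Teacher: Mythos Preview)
Your proof is correct. The paper itself offers no proof, labelling the lemma an ``obvious corollary'' of the preceding one, so your argument more than suffices.

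One remark: the endpoint difficulty you flag is in fact absent. You correctly record that $\phi_m=j_m+q^1(m)$ vanishes to \emph{exactly} second order at $\nu^\pm(m)$ (namely $\phi_m=\phi_m'=0$ and $\phi_m''=j_m''>0$ there) and is strictly positive on the open interval. This already yields a smooth factorisation $\phi_m(\nu)=(\nu-\nu^-(m))^2(\nu^+(m)-\nu)^2\,g(\nu)$ with $g$ smooth and positive on $[\nu^-(m),\nu^+(m)]$, so $\sqrt{\phi_m/\kappa}$ is smooth on the \emph{closed} interval and the graph parametrisation $\nu\mapsto\bigl(\nu,\pm\sqrt{\phi_m(\nu)/\kappa(\nu)}\bigr)$ directly produces two regular curves joining the saddles. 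The ``corner'' you worry about would appear only if one extended the square root to a two-sided neighbourhood of an endpoint, which is irrelevant here. Your dynamical-systems route via the hyperbolic linearisation of \eqref{fods} and the stable/unstable manifold theorem is a perfectly valid alternative---and it is what one would need if $\phi_m$ vanished only to first order---but in the present situation it is heavier machinery than the problem requires.
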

We detail the point made at the end of Section 1:
\begin{remark}\label{Rk1}
While for non-relativistic \emph{barotropic} Korteweg fluids mass and momentum are the primary conserved 
quantities and it is the conservation of energy which as companion law yields the additional 
jump condition (``kinetic rule'') \cite{B1}, we see that for relativistic \emph{barotropic} Korteweg 
fluids energy and momentum are the primary conserved quantities and the conservation of \emph{entropy} 
plays the r\^ole of the companion law that provides the kinetic rule. 
\end{remark}

\section{Phase boundaries in non-barotropic Korteweg fluids}
The dynamics of a perfect non-barotropic fluid is governed -- cf.\ \cite{F18} -- by 
the non-barotropic Euler equations
\be\label{Eulernonbaro}
T^{\alpha\beta}_{,\beta}=0,\quad N^\beta_{,\beta}=0,
\ee
where in the stress-energy tensor and the particle current vector,
\be\label{TN}
T^{\alpha\beta}=\rho U^\alpha U^\beta+p\Pi^{\alpha\beta},\quad N^\beta=nU^\beta,
\ee
$n$ is the particle number density, and energy and pressure can be viewed as functions 
\be\label{deftilderhotildep}
\rho=\tilde\rho(n,s),\quad p=\tilde p(n,s)
\ee
of $n$ and the specific entropy density $s$; the two functions are related by
\be\label{reltilderhotildep} 
\tilde p(n,s)=n\tilde \rho(n,s)-\tilde\rho(n,s).
\ee
For smooth solutions, the resulting companion law $$(SU^\beta)_{,\beta}=0$$ for the entropy 
$S=ns$ combines with the particle number conservation law \eqref{Eulernonbaro}$_2$ to
$$
U^\beta s_{,\beta}=0,
$$
i.\ e., constancy of specific entropy along particle paths. This property allows for restricting 
attention to \emph{isentropic solutions}, i.e., solutions for which the specific entropy is everywhere 
the same,
$$
s=s_*;
$$
for any given value of $s_*$, the system \eqref{Eulernonbaro} is identical with the description 
\eqref{Eulerbaro}, \eqref{idealstress}, \eqref{nconservation} of the isentropic fluid given by  
\eqref{rhoofn} with 
\be\label{ristilderho}
r(n) =\tilde\rho(n,s_*).
\ee
We again work with the extended energy and extended pressure,
now 
\be\label{barrhons}
\bar\rho(n,\partial n,s)=\tilde\rho(n,s)+\frac12\kappa(n,s)n_{,\gamma}n^{,\gamma}
\ee
and
\be\label{barpns}
\bar p(n\partial n,\partial^2 n,s)=n\frac{\delta\bar\rho(n,\partial n,s)}{\delta n}-\bar\rho(n\partial n,s).
\ee  
We define the Euler-Korteweg equation of such non-barotropic fluid as 
\be\label{EulerKortewegnonbaro}
\bar T^{\alpha\beta}_{,\beta}=0,\quad N^\beta_{,\beta}=0,
\ee
with \eqref{fullT}, \eqref{barrhons}, \eqref{barpns}, 
\be\label{defcheckKof}
\check K^{\alpha\beta}
=
\kappa n^{,\alpha} n^{,\beta}
\ee
the non-scalar part of 
\be\label{defKofn}
K^{\alpha\beta}
=
-n(\kappa n^{,\gamma})_{,\gamma}\Pi^{\alpha\beta}
+\kappa n^{,\alpha}n^{,\beta},
\ee
and \eqref{TN}$_2$.
\par\medskip
We call the fluid \eqref{deftilderhotildep}, \eqref{reltilderhotildep} of van der Waals type 
at a given value $s_*$ of specific entropy if, in specialization of \eqref{nonconvex}, the second 
derivative $r''(n)$ of $r$ from \eqref{ristilderho} changes sign first from positive to negative and 
then from negative to positive, as $n$ varies from $0$  to $\infty$. 
\par\medskip

Applying Theorem \ref{statptsandhetorbs} to this situation readily yields 
\begin{theorem}\label{EPBnonbaro}
Consider a non-barotropic fluid  \eqref{deftilderhotildep}, \eqref{reltilderhotildep} that is 
of van der Waals type at a given value $s_*$ of the specific entropy. Then there
is a non-trivial one-parameter family 
\be\label{nendstates}
\left((\underline n,\underline U^0,\underline U^1)(m),
(\overline n,\overline U^0,\overline U^1)(m)\right), \quad
0\le m<\bar m,
\ee
of pairs of non-identical states such that for every value of the parameter $m$, system 
\eqref{EulerKortewegnonbaro}
with \eqref{fullT}, \eqref{barrhons}, \eqref{barpns}, \eqref{defcheckKof}
has two heteroclinic solutions 
$$
\begin{aligned}
&x^1\mapsto (\overrightarrow n_m(x^1),s_*,\overrightarrow U^0_m(x^1),\overrightarrow U^1_m(x^1),0,0)),\\
&x^1\mapsto (\overleftarrow  n_m(x^1),s_*,\overleftarrow  U^0_m(x^1),\overleftarrow  U^1_m(x^1),0,0)),
\end{aligned}
$$
with
\be
\begin{aligned}
(\overrightarrow n_m,\overrightarrow U^0_m,\overrightarrow U^1_m)(-\infty)&=(\underline n,\underline U^0,\underline U^1)(m)\\
(\overrightarrow n_m,\overrightarrow U^0_m,\overrightarrow U^1_m)(+\infty)&=(\overline  n,\overline U^0,\overline U^1)(m)
\end{aligned}
\ee
and
\be
\begin{aligned}
(\overleftarrow n_m,\overleftarrow U^0_m,\overleftarrow U^1_m)(-\infty)&=(\overline n,\overline  U^0,\overline  U^1)(m)\\
(\overleftarrow n_m,\overleftarrow U^0_m,\overleftarrow U^1_m)(+\infty)&=(\underline n,\underline U^0,\underline U^1)(m).
\end{aligned}
\ee
\end{theorem}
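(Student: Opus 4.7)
The plan is to reduce Theorem \ref{EPBnonbaro} to Theorem \ref{statptsandhetorbs} by restricting the non-barotropic Euler-Korteweg system to isentropic configurations with $s\equiv s_*$, and then transplant the barotropic heteroclinic family verbatim. The paragraph just preceding the theorem already records the key reduction at the level of the perfect-fluid equations; I would argue that the same reduction survives the Korteweg extension, because the capillarity coefficient $\kappa(n,s)$ and the energy $\tilde\rho(n,s)$ depend on $s$ only parametrically.

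First I would verify the reduction. If a solution of \eqref{EulerKortewegnonbaro} has $s\equiv s_*$, then, setting $r(n):=\tilde\rho(n,s_*)$ and $\kappa_*(n):=\kappa(n,s_*)$, the extended energy \eqref{barrhons} and extended pressure \eqref{barpns} become identical to the barotropic objects from Section 3 with $\nu$ replaced by $n$. Hence $\bar T^{\alpha\beta}$ built from \eqref{fullT}, \eqref{defcheckKof}, \eqref{defKofn} coincides with the barotropic Korteweg stress-energy tensor, so $\bar T^{\alpha\beta}{}_{,\beta}=0$ is exactly \eqref{EKbaro}. Moreover, the particle conservation $N^\beta{}_{,\beta}=0$ with $s$ constant reads $(nU^\beta)_{,\beta}=0$, which is precisely the companion law \eqref{nuconservationagain} with $\nu=n$. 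By Theorem 3.1 (equivalently, by the reduction used in the proof of Theorem \ref{statptsandhetorbs}), this pair of equations is, on smooth solutions, the full system \eqref{EKbaro}--\eqref{defcheckK}.

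Next I would match the van der Waals hypotheses. By definition, the non-barotropic fluid is of van der Waals type at $s_*$ exactly when $r(n)=\tilde\rho(n,s_*)$ satisfies \eqref{vdW} for some $0<n_A<n_B$. Applying Theorem \ref{statptsandhetorbs} to this $r$ (and to $\kappa_*$) yields the one-parameter family of end-state pairs $(\underline\nu(m),\overline\nu(m))$ and heteroclinic traveling waves $\overrightarrow\nu_m,\overleftarrow\nu_m$ of the barotropic Euler-Korteweg system. Relabelling $\nu$ as $n$ and appending the constant value $s_*$ and the already-prescribed components $U^0,U^1,0,0$, I obtain candidate profiles of the form asserted in the theorem.

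The only point that needs checking (and the mildly subtle one) is that these lifted candidates really solve the full non-barotropic system \eqref{EulerKortewegnonbaro}, not merely its isentropic reduction. Since $s\equiv s_*$ is spatially constant, $U^\beta s_{,\beta}=0$ holds automatically, and the companion law $(SU^\beta)_{,\beta}=s_*(nU^\beta)_{,\beta}=0$ follows from the already-verified $(nU^\beta)_{,\beta}=0$; so \eqref{Eulernonbaro}$_2$ is satisfied. Combined with $\bar T^{\alpha\beta}{}_{,\beta}=0$ from the previous step, this gives \eqref{EulerKortewegnonbaro}. The convergence statements at $x^1\to\pm\infty$ transfer directly from Theorem \ref{statptsandhetorbs}, with $(\underline n,\overline n):=(\underline\nu,\overline\nu)$. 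The main (modest) obstacle is really just this bookkeeping: confirming that nothing in the definitions \eqref{barrhons}--\eqref{defKofn} introduces $s$-derivative terms that would spoil the reduction, which is immediate because $s$ enters only as a frozen parameter.
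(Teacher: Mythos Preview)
Your proposal is correct and follows exactly the paper's approach: the paper simply states that applying Theorem \ref{statptsandhetorbs} to the situation \eqref{ristilderho} with $s=s_*$ ``readily yields'' Theorem \ref{EPBnonbaro}, and your write-up spells out precisely this reduction (with the added, but routine, verification that the lifted isentropic profiles solve the full non-barotropic system).
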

We conclude by statin the counterpart of Remark \ref{Rk1} for isentropic 
\begin{remark}
While for non-relativistic \emph{isentropic} Korteweg fluids mass and momentum are the primary conserved 
quantities and it is the conservation of energy which as companion law yields the additional 
jump condition (``kinetic rule'') \cite{B1}, we see that for relativistic \emph{isentropic} Korteweg 
fluids energy and momentum are the primary conserved quantities and the conservation of \emph{mass} 
plays the r\^ole of the companion law that provides the kinetic rule. 
\end{remark}

\newpage

\end{document}